\newtheorem{theorem}{Theorem}
\newtheorem{lemma}[theorem]{Lemma}
\newtheorem{proposition}[theorem]{Proposition}
\title{Biholes in balanced bipartite graphs}
\author{Stefan Ehard, Elena Mohr, and Dieter Rautenbach\\[3mm]
\normalsize Institute of Optimization and Operations Research, Ulm University, Germany\\
\texttt{$\{$stefan.ehard,elena.mohr,dieter.rautenbach$\}$@uni-ulm.de}}
\date{}
\begin{document}
\maketitle
\onehalfspace

\begin{abstract}
A bihole in a bipartite graph $G$ with partite sets $A$ and $B$ 
is an independent set $I$ in $G$ with $|I\cap A|=|I\cap B|$.
We prove lower bounds on the largest order of biholes 
in balanced bipartite graphs
subject to conditions involving the vertex degrees and the average degree.\\[2mm]
{\bf Keywords:} bihole; independent set\\[2mm]
{\bf MSC 2020 classification:} 05C69
\end{abstract}

\section{Introduction}

In \cite{axtowe,axsesnwe} 
Axenovich et al.~study {\it biholes} 
defined as independent sets in bipartite graphs
containing equally many vertices from 
both parts of a fixed bipartition.
They present several lower bounds 
on the order of largest biholes 
subject to degree conditions.
Here we pursue some of the questions
motivated by \cite{axsesnwe}.
For a detailed discussion of the motivation of biholes,
we refer to \cite{axsesnwe}.
First, we collect some notation and definitions.
We consider only finite, simple, and undirected graphs.
For a graph $G$, we denote 
the vertex set,
the edge set,
the order, and
the size by
$V(G)$,
$E(G)$,
$n(G)$, and
$m(G)$, 
respectively.
Let $G$ be a bipartite graph with partite sets $A$ and $B$.
A {\it bihole of order $k$ in $G$}
is an independent set $I$ in $G$
with $|I\cap A|=|I\cap B|=k$.
Note that the definition of a bihole tacitly 
requires to fix a bipartition of $G$,
which is unique only if $G$ is connected.
Note furthermore that the {\it order} of a bihole $I$ 
is half the cardinality of the set $I$.
Let $\tilde{\alpha}(G)$ be the largest order 
of a bihole in $G$.
A bipartite graph $G$ with partite sets $A$ and $B$
is {\it balanced} if $|A|=|B|$.
For an integer $k$, let $[k]$ be the set of positive integers at most $k$,
and let $[k]_0=\{ 0\}\cup [k]$.

For positive integers $n$ and $\Delta$,
Axenovich et al.~\cite{axsesnwe} define 
$f(n,\Delta)$ as the largest integer $k$ 
such that every bipartite graph $G$
with partite sets $A$ and $B$ satisfying
\begin{itemize}
\item $|A|=|B|=n$, and 
\item the degree $d_G(u)$ of every vertex $u$ from $A$ is at most $\Delta$,
\end{itemize}
has a bihole of order $k$.
Similarly, they define $f^*(n,\Delta)$ as the largest integer $k$ 
such that every balanced bipartite graph $G$ 
of order $2n$ and maximum degree at most $\Delta$,
has a bihole of order $k$.
The definitions immediately imply $f(n,\Delta)\leq f^*(n,\Delta)$.

In \cite{axsesnwe} Axenovich et al.~show the following results
for integers $n$ and $\Delta$ with $n\geq\Delta\geq 2$:
\begin{eqnarray}
f(n,2)&=&\left\lceil\frac{n}{2}\right\rceil-1,\label{eax1}\\
f(n,\Delta)&\geq&\left\lfloor\frac{n-2}{\Delta}\right\rfloor,\label{eax2}\\
f(n,\Delta)&=&\Theta\left(\frac{\ln \Delta}{\Delta}n\right)
\mbox{ for large but fixed $\Delta$ and $n$ sufficiently large, and}\label{eax3}\\
0.3411n&<&f(n,3)\leq f^*(n,3)<0.4591n
\mbox{ for $n$ sufficiently large.}\label{eax4}
\end{eqnarray}
They explicitly ask for the value of $f(n,3)$ for sufficiently large $n$.

While the parameters $f(n,\Delta)$ and $f^*(n,\Delta)$ 
might appear closely related to the independence number
$\alpha(G)$ of a graph $G$,
and one might be tempted to expect a similar behavior,
the requirement to contain equally many vertices
from both partite sets imposes a strict condition.
In fact, balancing the intersections with the partite sets
seems to be one of the challenges in proofs 
about these parameters.

The following three tight lower bounds on the independence number 
$\alpha(G)$ of a graph $G$ 
with average degree $d=\frac{2m(G)}{n(G)}$ 
and maximum degree at most $\Delta$ 
are well known \cite{ca,we,tu}:
\begin{eqnarray}\label{ealpha}
\alpha(G)\geq \sum\limits_{u\in V(G)}\frac{1}{d_G(u)+1}
\geq \frac{n}{d+1}\geq \frac{n}{\Delta+1}.
\end{eqnarray}
The inequality (\ref{eax2}) translates
the final bound in (\ref{ealpha}) from independent sets to biholes,
but (\ref{eax3}) indicates that asymptotically
stronger lower bounds hold.
The result (\ref{eax3}) implies the following similar result involving the average degree.

\begin{proposition}\label{propasymp}
There exists a real $d_0$ such that, 
for every real $d\geq d_0$,
there is some integer $n_0(d)$ such that, 
for every integer $n\geq n_0(d)$,
the following statement holds:
If $G$ is a balanced bipartite graph 
of order $2n$ that has at most $dn$ edges, then 
$\tilde{\alpha}(G)\geq \frac{\ln(d)}{8d}n$.
\end{proposition}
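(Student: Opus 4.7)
The plan is to reduce the average-degree hypothesis of Proposition~\ref{propasymp} to the bounded-maximum-degree regime handled by (\ref{eax3}) via a trimming-and-rebalancing argument. Fix the threshold $\Delta=\lceil 2d\rceil$. Since $G$ has at most $dn$ edges, the sum of vertex degrees on each side of the bipartition is at most $dn$, and Markov's inequality bounds the number of vertices in each partite set of degree exceeding $\Delta$ by $\tfrac{dn}{\Delta}\leq\tfrac{n}{2}$. Deleting all such high-degree vertices from both $A$ and $B$ yields a bipartite subgraph $H$ of $G$ with partite sets of sizes at least $\tfrac{n}{2}$ each and maximum degree at most $\Delta$. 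Restoring balance by removing a few additional vertices from the larger partite set of $H$ produces a balanced bipartite subgraph $G'$ of $G$ with partite sets of common size $n'\geq\tfrac{n}{2}$ and maximum degree at most $\Delta$; every bihole of $G'$ is a bihole of $G$, so $\tilde{\alpha}(G)\geq\tilde{\alpha}(G')$.

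Applying (\ref{eax3}) to $G'$ -- once $d_0$ is large enough that $\Delta=\lceil 2d\rceil$ falls in its range of applicability, and $n_0(d)$ is large enough that $n'\geq\tfrac{n}{2}$ meets the ``$n$ sufficiently large'' clause -- yields $\tilde{\alpha}(G')\geq f(n',\Delta)\geq c\,\tfrac{\ln\Delta}{\Delta}\,n'$ for some absolute constant $c>0$. Combining $\tfrac{\ln\Delta}{\Delta}=(1-o(1))\tfrac{\ln d}{2d}$ as $d\to\infty$ with $n'\geq\tfrac{n}{2}$ gives $\tilde{\alpha}(G)\geq\bigl(\tfrac{c}{4}-o(1)\bigr)\tfrac{\ln d}{d}\,n$, which exceeds $\tfrac{\ln d}{8d}\,n$ once $c\geq\tfrac{1}{2}$ and $d_0$ is sufficiently large.

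The main obstacle is tracking the unspecified constant $c$ in (\ref{eax3}) carefully enough to recover the explicit prefactor $\tfrac{1}{8}$. The threshold choice $\Delta=\lceil 2d\rceil$ is in fact optimal within this reduction, since among $\Delta=\lceil Cd\rceil$ the resulting coefficient $c\,\tfrac{C-1}{C^2}$ is maximized at $C=2$. Since bipartite graphs are triangle-free, Shearer's theorem -- the standard route to lower bounds of the shape $\tfrac{\ln\Delta}{\Delta}\,n$ -- delivers $c$ arbitrarily close to $1$ as $\Delta\to\infty$, so the target $\tfrac{1}{8}$ has substantial slack and the argument above suffices; no genuinely new idea beyond (\ref{eax3}) is required.
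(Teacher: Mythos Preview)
Your reduction is essentially the paper's own: trim vertices of degree exceeding roughly $2d$, rebalance, and invoke the lower bound on $f(n',\Delta)$ from \cite{axsesnwe}. Two remarks.

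First, trimming only the $A$-side suffices, since the definition of $f(n,\Delta)$ imposes the degree bound on just one partite class; the paper does exactly this, which slightly simplifies the rebalancing.

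Second---and this is a genuine gap in your write-up---the appeal to Shearer's theorem to secure $c\geq\tfrac{1}{2}$ is misplaced. Shearer bounds the independence number of a triangle-free graph, not the bihole parameter, and a large independent set in a bipartite graph may lie entirely in one partite class, yielding no bihole at all. So Shearer does not by itself deliver any constant $c>0$ in $f(n,\Delta)\geq c\,\frac{\ln\Delta}{\Delta}\,n$. The correct move, which the paper makes, is simply to quote the explicit form of (\ref{eax3}) established in \cite{axsesnwe}, namely $f(n,\Delta)\geq\frac{\ln\Delta}{2\Delta}\,n$ for $\Delta$ and $n$ sufficiently large; this furnishes $c=\tfrac{1}{2}$ directly, and the computation $\frac{\ln(\lfloor 2d\rfloor)}{2\lfloor 2d\rfloor}\cdot\frac{n}{2}\geq\frac{\ln d}{8d}\,n$ then closes the argument with no slack to spare.
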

\begin{proof}
Axenovich et al.~\cite{axsesnwe} show the following:
\begin{quote}
{\it There exists an integer $\Delta_1$ such that, 
for every integer $\Delta\geq \Delta_1$,
there is some integer $n_1(\Delta)$ such that 
$f(n,\Delta)\geq \frac{\ln(\Delta)n}{2\Delta}$
for every $n\geq n_1(\Delta)$.}
\end{quote}
Let $d_0=\max\left\{1,\frac{\Delta_1+1}{2}\right\}$ and,
for every real $d\geq d_0$, let $n_0(d)=2n_1(\lfloor 2d\rfloor)$.
Now, let $d$ be any real at least $d_0$,
and let $n$ be any integer at least $n_0(d)$.
Let $G$ be a balanced bipartite graph 
of order $2n$ that has at most $dn$ edges.
Let $A$ and $B$ be the partite sets of $G$.
Let $n_{>2d}$ be the number of vertices $u$ in $A$
with $d_G(u)>2d$.
Since $2dn_{>2d}\leq dn$,
we have $n_{>2d}\leq \frac{n}{2}$,
which implies that 
$A'=\{ u\in A:d_G(u)\leq \lfloor 2d\rfloor\}$
contains at least $\frac{n}{2}$ vertices.
Let $G'$ arise from $G$ by removing 
all vertices in $A\setminus A'$ from $A$
as well as any $|A\setminus A'|$ vertices from $B$.
Clearly, the graph $G'$ is a balanced bipartite graph
of order $2n'$ with $n'\geq \frac{n}{2}$
such that $d_{G'}(u)\leq \lfloor 2d\rfloor$ for every vertex in $A'$.
Since $d\geq 1$,
$\lfloor 2d\rfloor\geq \Delta_1$,
and $n'\geq \frac{n_0(d)}{2}=n_1(\lfloor 2d\rfloor)$,
the result from \cite{axsesnwe} implies
$$\tilde{\alpha}(G)
\geq \tilde{\alpha}(G')
\geq f(n',\lfloor 2d\rfloor)
\geq \frac{\ln(\lfloor 2d\rfloor)n'}{2\lfloor 2d\rfloor}
\geq \frac{\ln(d)n}{8d}.$$
\end{proof}
\noindent Inspired by the second bound in (\ref{ealpha}),
we prove the following,
which, in view of Proposition \ref{propasymp},
is interesting for small values of $d$ or $n$.

\begin{theorem}\label{theoremx}
If $G$ is a balanced bipartite graph 
of order $2n$ that has at most $dn$ edges
for some non-negative real $d$, then 
\begin{eqnarray}\label{ex}
\tilde{\alpha}(G)\geq \frac{n}{d+1}-2.
\end{eqnarray}
\end{theorem}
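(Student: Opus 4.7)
The plan is to construct a bihole of order $k:=\lfloor n/(d+1)\rfloor$ directly; since $k\ge n/(d+1)-1$, this is already strictly stronger than the claim (\ref{ex}), and the constant $2$ in the statement is only slack.

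I would enumerate the vertices of $A$ as $u_1,\ldots,u_n$ in non-decreasing order of their $G$-degree and set $I_A:=\{u_1,\ldots,u_k\}$. The single non-trivial ingredient is the elementary fact that the $k$ smallest entries of any multiset of $n$ non-negative reals sum to at most $k/n$ times the total sum. Applied to the $A$-degree sequence, whose total equals $m(G)\le dn$, this yields
\[
|N_G(I_A)|\;\le\;\sum_{u\in I_A}d_G(u)\;\le\;\frac{k\,m(G)}{n}\;\le\;kd.
\]
By the definition of $k$ one has $k(d+1)\le n$, hence $|B\setminus N_G(I_A)|\ge n-kd\ge k$, so I may pick an arbitrary $k$-subset $I_B\subseteq B\setminus N_G(I_A)$.

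Because $G$ is bipartite there are no edges inside $A$ or inside $B$, and $I_B\cap N_G(I_A)=\emptyset$ by construction forces no edge to run between $I_A$ and $I_B$. Thus $I_A\cup I_B$ is independent in $G$ with $|I_A|=|I_B|=k$, i.e.\ a bihole of order $k\ge n/(d+1)-1\ge n/(d+1)-2$, which is exactly (\ref{ex}).

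No serious obstacle arises in this plan: the whole argument rests on pairing the trivial bound $|N_G(I_A)|\le\sum_{u\in I_A}d_G(u)$ with the (also trivial) observation that the sum of the $k$ smallest degrees in $A$ is at most $k\cdot m(G)/n$. The only choice one has to make is to take $I_A$ to consist of the $k$ lowest-degree vertices of $A$, which is what turns the two trivial bounds into a guarantee of at least $k$ non-neighbours in $B$ available for $I_B$.
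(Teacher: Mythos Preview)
Your argument is correct and in fact establishes the slightly stronger bound $\tilde{\alpha}(G)\ge\lfloor n/(d+1)\rfloor\ge n/(d+1)-1$. The paper takes a genuinely different route: it considers a minimum counterexample and splits into three cases according to the maximum degree $\Delta_A$ on side~$A$. When $\Delta_A\le 1$ it invokes Lemma~\ref{lemma3}; when $2\le\Delta_A<d+1$ it quotes the external bound~(\ref{eax2}); and when $\Delta_A\ge d+1$ (and, by symmetry, $\Delta_B\ge d+1$) it deletes a vertex of degree at least $d+1$ from each side and appeals to minimality. Your proof is more elementary --- it requires neither Lemma~\ref{lemma3}, nor the prior result~(\ref{eax2}), nor any inductive step --- and it delivers a sharper additive constant. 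The paper's approach, on the other hand, makes explicit how the existing bounded-degree results already subsume the average-degree statement, and its reduction step isolates the role of high-degree vertices, a structural feature that your one-shot averaging argument bypasses entirely.
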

\noindent Furthermore, 
we contribute a small improvement of the lower bound on $f(n,3)$ from \cite{axsesnwe}.
Therefore, we need the following refined version of $f(n,\Delta)$:
For non-negative integers $d_1<d_2<\ldots<d_\ell$ 
and $n_1,n_2,\ldots,n_\ell$,
let $\tilde{\alpha}(d_1^{n_1},d_2^{n_2},\ldots,d_\ell^{n_\ell})$ be the largest $k$ 
such that every bipartite graph $G$
with partite sets $A$ and $B$ such that 
\begin{itemize}
\item $|A|=|B|=n_1+n_2+\cdots+n_\ell$, and
\item $n_i=|\{ u\in A:d_G(u)=d_i\}|$ for every $i\in [\ell]$,
\end{itemize}
has a bihole of order $k$.
For the considered graphs,
the sequence  
$\underbrace{d_1\ldots d_1}_{n_1}
\ldots
\underbrace{d_\ell\ldots d_\ell}_{n_\ell}$
is the degree sequence of the vertices in $A$.
Note that 
$$f(n,\Delta)=\min\Big\{ \tilde{\alpha}(0^{n_0},\ldots,\Delta^{n_{\Delta}}):
n_0,\ldots,n_{\Delta}\in \mathbb{N}_0\mbox{ with }
n=n_0+\cdots +n_{\Delta}\Big\}.$$
Our next result can be considered to be a refinement of (\ref{eax1}).

\begin{theorem}\label{lemma1}
$\tilde{\alpha}(0^{n_0},1^{n_1},2^{n_2})\geq \frac{3}{4}n_0+\frac{1}{2}(n_1+n_2)-\frac{7}{4}.$
\end{theorem}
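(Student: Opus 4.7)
I plan to prove Theorem \ref{lemma1} by induction on $n = n_0+n_1+n_2$, writing $\beta := \frac{3}{4}n_0 + \frac{1}{2}(n_1+n_2) - \frac{7}{4}$ for the claimed lower bound. The base case is immediate: when $3n_0 + 2(n_1+n_2)\leq 7$, we have $\beta\leq 0\leq \tilde{\alpha}(G)$.

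In the inductive step, if $n_0\leq 3$ the claim follows directly from (\ref{eax1}): $\tilde{\alpha}(G)\geq f(n,2)\geq \lceil n/2\rceil - 1 \geq \frac{n}{2}-1$, and the inequality $\frac{n}{2}-1 \geq \beta$ is equivalent to $n_0 \leq 3$, so the claim follows in this regime.

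Assume therefore $n_0\geq 4$. If some $v\in B$ is isolated, pick any $u\in A_0$ and let $G' = G - \{u,v\}$, which is balanced with parameters $(n_0-1, n_1, n_2)$. The induction hypothesis gives $\tilde{\alpha}(G')\geq \beta - \tfrac{3}{4}$, and extending any bihole of $G'$ by the isolated pair $\{u,v\}$ produces a bihole of $G$ of size at least $\beta + \tfrac{1}{4}$.

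If no $v\in B$ is isolated, then $n_1+2n_2 = \sum_{v\in B}d_G(v) \geq n$, which forces $n_2\geq n_0 \geq 4$. For any $v\in B$ with $d_1$ neighbors in $A_1$ and $d_2$ in $A_2$, pairing $v$ with some $u\in A_0$ yields $G'=G-\{u,v\}$ with parameters $(n_0-1+d_1,\,n_1-d_1+d_2,\,n_2-d_2)$, and a direct computation gives $\tilde{\alpha}(G')\geq \beta - \tfrac{3}{4} + \tfrac{d_1}{4}$, which already exceeds $\beta$ when $d_1\geq 3$. For $d_1\in\{0,1,2\}$ one must extend a bihole $I'$ of $G'$ by one additional independent pair $\{u^*, v^*\}$ in $G$---either $\{u,v\}$ itself (when no neighbor of $v$ lies in the $A$-part of $I'$) or $\{u, v^*\}$ for some $v^*\in B\setminus(I'\cap B\cup N_G(I'\cap A))$---with existence of $v^*$ established by a neighborhood-counting argument using $|N_G(I'\cap A)|\leq 2|I'\cap(A_1\cup A_2)|$ together with $n_2\geq n_0$. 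I expect the main obstacle to be handling these subcases with $d_1\leq 2$ uniformly: it will likely require choosing $v$ strategically (for instance of minimum degree, to minimize the blocking set $N_G(v)$) and, in the most degenerate configurations, performing a deeper reduction that removes more than one pair at a time so that the accumulated loss of $\tfrac{3}{4}-\tfrac{d_1}{4}$ per step is recovered by the corresponding chain of $+1$ extensions.
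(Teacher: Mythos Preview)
Your reductions for $n_0\le 3$ and for the case when $B$ contains an isolated vertex are correct and in fact coincide with the paper's opening moves. The gap is in the remaining case, where you remove a single pair $\{u,v\}$ and hope to extend an inductively obtained bihole $I'$ of $G'$ by one further pair. The counting you sketch does not go through: writing $a,b,c$ for the number of vertices of $I'\cap A$ of $G$-degree $0,1,2$, the existence of a free $v^*\in B$ amounts to $k+b+2c<n$, i.e.\ $a+2b+3c<n$. But the inductive hypothesis gives you no control over the composition of $I'\cap A$. In the regime $n_0=4$, $n_1=0$, $n_2=n-4$ one has $\beta=\tfrac{n}{2}-\tfrac34$, and a bihole $I'$ of order $k\approx \tfrac{n}{2}$ whose $A$-side sits entirely in $A_2$ gives $a+2b+3c=3k\approx \tfrac{3n}{2}\gg n$, so no $v^*$ is guaranteed. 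The inequality $n_2\ge n_0$ does not rescue this. Your closing paragraph concedes the difficulty, but the proposed fixes (choosing $v$ of minimum degree, or chaining deeper reductions) do not touch the real issue: once you invoke induction you have lost all structural information about $I'$, and a purely numerical deficit of $\tfrac34$ per step cannot be recovered by adjoining an isolated $u\in A$ unless a matching free vertex in $B$ can be exhibited.

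The paper sidesteps this by \emph{not} attempting to extend an opaque inductive bihole. It first proves a structural lemma (Lemma~\ref{lemma2}): every component $H$ of $G$ with $|V(H)\cap A|<|V(H)\cap B|$ is a tree with $|V(H)\cap B|=|V(H)\cap A|+1$, and for every $i\in[|V(H)\cap A|]_0$ there is an independent set in $H$ meeting $A$ in exactly $i$ vertices and $B$ in exactly $|V(H)\cap A|-i$ vertices. Since $n_0\ge 4$ forces at least four such components $G_1,\dots,G_4$, one removes four isolated $A$-vertices together with $V(G_1)\cup\cdots\cup V(G_4)$, applies induction to the (still balanced) remainder, and then uses the tunable independent sets from Lemma~\ref{lemma2} inside $G_2$ and $G_4$ to balance exactly the surplus created by taking all of $B_1$, $B_3$, and the four isolated $A$-vertices. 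This explicit, adjustable construction inside the surplus-$B$ components is precisely the ingredient your plan is missing.
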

\noindent Finally, combining Theorem \ref{lemma1}
with the approach of Axenovich et al.~\cite{axsesnwe},
allows to slightly improve their lower bound on $f(n,3)$
as follows.
\begin{theorem}\label{theorem1}
For every $\epsilon\geq0$, 
there is some $n_0$ 
such that $f(n,3)\geq \big(0.34917-\epsilon\big)n$ 
for every $n\geq n_0$.
\end{theorem}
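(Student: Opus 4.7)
My plan is to combine Theorem~\ref{lemma1} with the randomized argument that Axenovich et al.~use to establish $f(n,3)>0.3411\,n$. Let $G$ be a bipartite graph with partite sets $A$ and $B$, $|A|=|B|=n$, and $\Delta(G)\le 3$. The starting point is the probabilistic selection from \cite{axsesnwe}: one produces, at least in expectation, disjoint subsets $A_1\subseteq A$ and $B_1\subseteq B$ such that $A_1\cup B_1$ is independent in $G$, together with a residual bipartite subgraph $H$ induced by $(A\setminus A_1)\cup(B\setminus B_1)$. In \cite{axsesnwe} the bihole order is bounded below by the balanced part of $A_1\cup B_1$ alone (supplemented by an elementary terminal estimate); our improvement will come from extracting additional vertices of a bihole from $H$ via Theorem~\ref{lemma1}.

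The key observation is that for Theorem~\ref{lemma1} to apply, the $A$-side of $H$ must have maximum degree at most $2$. I would therefore set up (or, if necessary, post-process) the Axenovich et al.~procedure so that every surviving vertex $u\in A\setminus A_1$ has at least one of its at most three neighbors already destroyed by $B_1$. With this in place, Theorem~\ref{lemma1} yields a bihole in $H$ of order
\[
\tfrac{3}{4}n_0+\tfrac{1}{2}(n_1+n_2)-\tfrac{7}{4},
\]
where $n_i=|\{u\in A\setminus A_1:d_H(u)=i\}|$. Joining this bihole of $H$ with a balanced portion of $A_1\cup B_1$ (and truncating the larger side) gives a bihole of $G$ strictly larger than the Axenovich bound, the gain being driven by the $\tfrac{3}{4}n_0$ term for residual isolated vertices.

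The quantitative step is to express, as a function of the selection probability $p$ and the degree distribution of $G$, the expected values of $|A_1|,|B_1|,n_0,n_1,n_2$; standard concentration (Azuma with bounded differences, or Chernoff for the independent selection) shows these random variables deviate from their means by $o(n)$, which is absorbed into the slack $\epsilon n$ together with the $-\tfrac{7}{4}$ additive loss of Theorem~\ref{lemma1}. Re-optimizing $p$ with the new terminal bound is a one-variable calculus problem whose solution pushes the attained constant from $0.3411$ up to $0.34917$; the worst case is attained (as in \cite{axsesnwe}) by $3$-regular bipartite graphs, so the improvement is genuinely driven by the positive fraction of vertices in $A\setminus A_1$ whose entire neighborhood is hit by $B_1$.

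The main obstacle is verifying that the selection of \cite{axsesnwe} can indeed be arranged so that every vertex of $A\setminus A_1$ has a neighbor in $B_1$, since otherwise Theorem~\ref{lemma1} does not apply to $H$. I expect to handle this by adding a short deterministic ``padding'' step: any surviving vertex of original degree $3$ whose neighborhood survives entirely in $B\setminus B_1$ is moved to $B_1$ (together with one of its neighbors), which destroys at most $o(n)$ additional vertices and does not affect the leading-order analysis. Once this compatibility is established, the remainder of the proof reduces to the explicit optimization described above.
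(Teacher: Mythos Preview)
Your high-level plan---randomly delete a $p$-fraction of $B$, then apply Theorem~\ref{lemma1} to the residual graph whose $A$-side has maximum degree $2$---is exactly the paper's strategy. However, several steps of your execution do not work as stated.

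First, for $f(n,3)$ only the $A$-degrees are bounded by $3$; vertices of $B$ may have arbitrarily large degree. Your concentration step (``Azuma with bounded differences'') therefore fails for $n_0,n_1,n_2$: flipping the membership of a single $B$-vertex in $B_1$ changes the residual degree of all its $A$-neighbours, and that Lipschitz constant is not bounded. The paper handles this by first deleting the set $B_{\mathrm{large}}$ of $B$-vertices of degree exceeding $\epsilon^{3/2}\sqrt{n}$ (only $O(\sqrt{n})$ of them, since $m(G)\le 3n$), so that the Simple Concentration Bound applies with $c=\epsilon^{3/2}\sqrt{n}$.

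Second, your ``padding'' claim is incorrect: the number of $A$-vertices of degree $3$ whose entire neighbourhood survives in $B\setminus B_1$ has expectation $(1-p)^3 n_3$, which is a positive fraction of $n$, not $o(n)$. (Also, such a vertex lies in $A$ and cannot be ``moved to $B_1$''.) The paper deletes these $n_3^{(2)}=\Theta(n)$ vertices explicitly and accounts for them in the optimisation; the specific choice of $p$ as the root of $p=(1-p)^3$ is made precisely so that the number $b^{(1)}\approx pn$ of $B$-vertices already removed matches the number $n_3^{(2)}\approx (1-p)^3 n$ of $A$-vertices that must still be removed, keeping the two sides balanced.

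Third, and most importantly, your combination step does not yield an independent set. If $A_1=\{a\in A:N_G(a)\cap B_1=\emptyset\}$ (so that $A_1\cup B_1$ is independent and every vertex of $A\setminus A_1$ has degree at most $2$ in $H$), then every vertex of $A_1$ has all its neighbours in $B\setminus B_1$, and every vertex of $A\setminus A_1$ has a neighbour in $B_1$; hence neither $A_1$ nor $B_1$ can in general be added to a bihole of $H$ without creating edges. The paper avoids this entirely: it does not try to reuse $B^{(1)}$ (or any $A_1$) in the final bihole. It simply discards $B^{(1)}$ together with $\max\{b^{(1)},n_3^{(2)}\}$ vertices from each side (including all degree-$3$ survivors, avoiding isolated $A$-vertices), obtaining a balanced residual $G^{(3)}$ with $A$-degrees at most $2$ and at least $(p^3-\epsilon)n$ isolated $A$-vertices, and applies Theorem~\ref{lemma1} to $G^{(3)}$ alone. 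The resulting bound $\frac{3}{4}p^3+\frac{1}{2}(1-p^3-p)$ evaluates to about $0.34917$ at $p\approx 0.31767$.
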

\noindent The proofs of the stated results as well as of further auxiliary statements are given in the following section.

\section{Proofs}

We begin with a restricted analogue of Theorem \ref{lemma1}.

\begin{lemma}\label{lemma3}
$\tilde{\alpha}(0^{n_0},1^{n_1})\geq n_0+\frac{n_1}{2}-\frac{1}{2}$.
\end{lemma}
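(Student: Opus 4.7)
The plan is to exploit the rigid structure of $G$: every vertex of $A$ has degree at most $1$, so the edges of $G$ form a disjoint union of vertex-disjoint stars with centres in $B$ and leaves in $A$. I would partition $A = A_0 \cup A_1$ according to degree and let $B_1 \subseteq B$ denote the set of star centres. With $b = |B_1|$ and leaf counts $j_1, \ldots, j_b$ (each $j_i \geq 1$ and $\sum_{i=1}^{b} j_i = n_1$), all remaining vertices --- the $n_0$ vertices of $A_0$ and the $n_0 + n_1 - b$ vertices of $B \setminus B_1$ --- are isolated.

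To build a bihole I would include every isolated vertex (contributing $n_0$ to the $A$-side and $n_0 + n_1 - b$ to the $B$-side) and then, from each star, either take a subset of its leaves or take its centre. Letting $\Gamma \subseteq [b]$ index the stars whose centres are chosen, the remaining stars can contribute up to $n_1 - \sum_{i \in \Gamma} j_i$ leaves to the $A$-side, so balance is achievable iff
\[
|\Gamma| + \sum_{i \in \Gamma} j_i \;\leq\; b,
\]
and in that case the bihole has order $n_0 + n_1 - b + |\Gamma|$. Proving the lemma therefore reduces to finding a feasible $\Gamma$ with $|\Gamma| \geq b - (n_1+1)/2$.

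I would pick $\Gamma$ to consist of the indices of the $r$ smallest values $j_{(1)} \leq \cdots \leq j_{(b)}$, and let $r^*$ be the largest feasible such $r$, that is, the largest $r$ with $r + \sum_{i \leq r} j_{(i)} \leq b$. Maximality forces $\sum_{i \leq r^*+1} j_{(i)} \geq b - r^*$, so the average of the first $r^*+1$ values is at least $(b-r^*)/(r^*+1)$, and since the subsequent $j_{(i)}$ are at least that average, one obtains
\[
n_1 \;\geq\; (b - r^*) + (b - r^* - 1)\cdot\frac{b-r^*}{r^* + 1} \;=\; \frac{b(b-r^*)}{r^*+1}.
\]
Clearing denominators this becomes $r^*(n_1 + b) \geq b^2 - n_1$, and the desired inequality $r^* \geq b - (n_1+1)/2$ is equivalent, after a short rearrangement, to $(n_1 - 1)(n_1 - b) \geq 0$, which holds since $n_1 \geq b$ and $n_1 \geq 1$ (the case $n_1 = 0$ being immediate).

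The main obstacle, as I anticipate it, is exactly this final arithmetic step: the averaging bound must be sharp enough to produce the threshold $b - (n_1+1)/2$ rather than something weaker. The structural decomposition of $G$ into stars and the greedy choice of $\Gamma$ as the smallest stars are both natural once one notices that the bihole problem separates completely across the connected components; the slightly delicate part is verifying that this greedy choice attains the claimed factor $1/2$.
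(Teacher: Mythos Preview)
Your argument is correct. The structural decomposition into stars with centres in $B$ is the same as in the paper, but the two proofs diverge from there. The paper proceeds by an explicit construction: it separates the $K_2$-components (your stars with $j_i=1$) from the larger stars, takes roughly half of the $K_2$-endpoints from each side, and from every larger star takes all but one of its leaves; a short count using $k\le (n_1-\ell)/2$ then gives the bound directly. Your proof instead casts the problem as maximising $|\Gamma|$ subject to the feasibility constraint $|\Gamma|+\sum_{i\in\Gamma}j_i\le b$, chooses $\Gamma$ greedily among the smallest stars, and closes with an averaging inequality that reduces to $(n_1-1)(n_1-b)\ge 0$. In effect, your greedy $\Gamma$ picks exactly (a subset of) the $K_2$-centres first, so the two constructions coincide in the extremal case, but the paper's version avoids the optimisation layer and the averaging step entirely. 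What your framework buys is a cleaner separation between the structural part and the counting part, and it would adapt more readily if one wanted to analyse variants with other degree constraints; the paper's version is shorter and more transparent for this specific lemma. One minor remark: your phrase ``is equivalent, after a short rearrangement'' is slightly loose --- what is equivalent to $(n_1-1)(n_1-b)\ge 0$ is the sufficiency of your lower bound $r^*\ge (b^2-n_1)/(n_1+b)$ for the target inequality, not the target inequality itself --- but the mathematics is sound.
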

\begin{proof}
Let $G$ be a bipartite graph
with partite sets $A$ and $B$ such that 
\begin{itemize}
\item $|A|=|B|=n_0+n_1$, and
\item $n_i=|\{ u\in A:d_G(u)=i\}|$ for $i\in \{ 0,1\}$.
\end{itemize}
Let $G_1,\ldots,G_k$ be the components of $G$ 
that are of order more than $2$.
Each $G_i$ is a star with $n_{1,i}\geq 2$ endvertices from $A$
and a center vertex from $B$.
It follows that $G$ contains $\ell=n_1-\sum\limits_{i=1}^kn_{1,i}$
components that are $K_2$'s, and $B$ contains 
$n_0+\sum\limits_{i=1}^k(n_{1,i}-1)$ isolated vertices.
Now, there is a bihole $I$ in $G$ containing
\begin{itemize}
\item all $n_0$ isolated vertices from $A$,
\item at least $\frac{\ell-1}{2}$ vertices of degree $1$ from $A$
as well as at least $\frac{\ell-1}{2}$ vertices of degree $1$ from $B$; all coming from $K_2$ components,
\item $\sum\limits_{i=1}^k(n_{1,i}-1)$ vertices of degree $1$ from $A$; coming from the $G_i$'s, and
\item all $n_0+\sum\limits_{i=1}^k(n_{1,i}-1)$ isolated vertices  from $B$.
\end{itemize}
Since $k\leq \frac{n_1-\ell}{2}$, we obtain that $I$ has order at least
\begin{eqnarray*}
n_0+\frac{\ell-1}{2}+\sum\limits_{i=1}^k(n_{1,i}-1)
&=&n_0+\frac{\ell-1}{2}+(n_1-\ell-k)\\
&\geq &n_0+n_1-\frac{\ell}{2}-\frac{n_1-\ell}{2}-\frac{1}{2}\\
&=&n_0+\frac{n_1}{2}-\frac{1}{2},
\end{eqnarray*}
which completes the proof.
\end{proof}
\noindent Now, we proceed to the proof of Theorem \ref{theoremx}.

\begin{proof}[Proof of Theorem \ref{theoremx}.]
Suppose, for a contradiction, 
that $G$ is a counterexample of minimum order $2n$.
Let $\Delta_A$ be the maximum degree of the vertices in $A$.
First, we assume that $\Delta_A<2$.
Let $A$ contain $n_i$ vertices of degree $i$ for $i\in \{ 0,1\}$.
Since $G$ has $n_1$ edges, 
we have $d\geq \frac{n_1}{n_0+n_1}$.
Now, since 
$n_0+\frac{n_1}{2}
\geq \frac{n_0+n_1}{\frac{n_1}{n_0+n_1}+1}$, 
Lemma \ref{lemma3} implies 
$$\tilde{\alpha}(G)
\geq n_0+\frac{n_1}{2}-\frac{1}{2}
\geq \frac{n_0+n_1}{\frac{n_1}{n_0+n_1}+1}-\frac{1}{2}
=\frac{n}{d+1}-\frac{1}{2},$$
and (\ref{ex}) follows.
Next, we assume that $2\leq \Delta_A<d+1$.
In this case, the inequality (\ref{eax2}) implies
$$\tilde{\alpha}(G)\geq \left\lfloor\frac{n-2}{\Delta_A}\right\rfloor
>\frac{n}{\Delta_A}-2
>\frac{n}{d+1}-2.$$
Finally, we may assume that $\Delta_A\geq d+1$.
By symmetry, we may also assume that 
the maximum degree $\Delta_B$ of the vertices in $B$
satisfies $\Delta_B\geq d+1$. 
Let $u\in A$ and $v\in B$ be vertices of degree at least $d+1$. 
The graph $G'=G-\{u,v\}$ is balanced with partite sets of order $n-1$, 
and at most $nd-d_G(u)-d_G(v)+1\leq nd-2d-1$ edges. 
By the choice of $G$, the graph $G'$ is no counterexample, 
and we obtain 
$$\tilde{\alpha}(G)\geq  \tilde{\alpha}(G')\geq \frac{n-1}{\frac{nd-2d-1}{n-1}+1}-2
=\frac{(n-1)^2}{(d+1)(n-2)}-2
\geq \frac{n}{d+1}-2,$$
where we use $(n-1)^2\geq n(n-2)$.
This completes the proof.
\end{proof}
\noindent The following result illustrates a different approach for $d=2$,
and gives a better additive constant.

\begin{proposition}\label{proposition1}
If $G$ is a balanced bipartite graph 
of order $2n\geq 4$ that has at most $2n$ edges,
then $\tilde{\alpha}(G)\geq \frac{n-2}{3}$.
\end{proposition}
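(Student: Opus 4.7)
The plan is to prove the bound by a direct averaging argument over $y$-subsets of $B$, contrasting with the inductive strategy used in the proof of Theorem~\ref{theoremx}. Set $y := \lceil (n-2)/3 \rceil$; a short case check on $n \bmod 3$ shows $y \leq n/3$, and the case $n = 2$ is trivial with $y = 0$. For any $Y \subseteq B$ with $|Y| = y$, the set $(A \setminus N_G(Y)) \cup Y$ is independent in $G$, so as soon as $|A \setminus N_G(Y)| \geq y$, selecting any $y$ vertices of $A \setminus N_G(Y)$ together with $Y$ yields a bihole of order $y$. The task therefore reduces to exhibiting such a $Y$.

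I would produce $Y$ by averaging over $\binom{B}{y}$. Double counting the pairs $(a,Y) \in A \times \binom{B}{y}$ with $N_G(a) \cap Y = \emptyset$ gives the identity
$$\sum_{Y \in \binom{B}{y}} |A \setminus N_G(Y)| \;=\; \sum_{a \in A} \binom{n - d_G(a)}{y},$$
so it suffices to show the right-hand side is at least $y\binom{n}{y}$. My intended tool is the elementary telescoping inequality $\binom{n}{y} - \binom{n-d}{y} \leq d \binom{n-1}{y-1}$ (obtained by iterating Pascal's identity). Summing over $a \in A$, invoking the edge-count bound $\sum_{a} d_G(a) = |E(G)| \leq 2n$, and using the standard identity $n \binom{n-1}{y-1} = y \binom{n}{y}$ lower-bounds the sum by $(n - 2y)\binom{n}{y}$. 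Since $y \leq n/3$ forces $n - 2y \geq y$, the average of $|A \setminus N_G(Y)|$ over uniformly random $Y \in \binom{B}{y}$ is at least $y$, so a witnessing $Y$ must exist.

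The main conceptual step is spotting the right quantity to estimate, namely the average size of $A \setminus N_G(Y)$ under a uniformly random $y$-subset $Y$ of $B$; once this is in place the calculation is mechanical. The only delicate point is the numerical alignment $y = \lceil (n-2)/3 \rceil \leq n/3$, which becomes an equality in the cases $n \equiv 0 \pmod 3$ and matches the factor $2$ in the hypothesis $|E(G)| \leq 2n$ exactly, via $n - 2y \geq y$. I anticipate no obstacle beyond verifying this endpoint and the trivial base $n = 2$.
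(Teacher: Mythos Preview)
Your argument is correct and substantially different from the paper's. The paper proceeds by induction on $n$: it invokes (\ref{eax1}) to handle the case $\Delta_A\le 2$ or $\Delta_B\le 2$, and otherwise performs a case analysis on $\delta_A,\delta_B$, in each branch deleting two or three carefully chosen vertices from each side so that the edge count drops enough for the inductive hypothesis to apply, while a pair of removed vertices can be added back to the smaller bihole. Your route is a single first-moment computation: for $y=\lceil (n-2)/3\rceil$ you average $|A\setminus N_G(Y)|$ over all $y$-subsets $Y\subseteq B$, bound $\binom{n}{y}-\binom{n-d}{y}\le d\binom{n-1}{y-1}$ by telescoping Pascal's rule, and combine $\sum_{a\in A}d_G(a)=m(G)\le 2n$ with $y\le n/3$ to get the average at least $y$. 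This is shorter, self-contained (it does not need (\ref{eax1})), and in fact yields the slightly stronger integer bound $\tilde{\alpha}(G)\ge \lceil (n-2)/3\rceil$. It also generalises transparently: replacing $2n$ by $dn$ and taking $y=\lfloor n/(d+1)\rfloor$ gives a variant of Theorem~\ref{theoremx} with a better additive constant. The paper's inductive approach, on the other hand, is closer in spirit to the proof of Theorem~\ref{theoremx} and illustrates how local deletion arguments can be balanced across the two sides.
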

\begin{proof}
We prove the statement by induction on $n$.
Let $A$ and $B$ be the partite sets of $G$.
For $n=2$, the statement is trivial.
Now, let $n\geq 3$.
Let $\delta_A=\min\{ d_G(u):u\in A\}$,
$\Delta_A=\max\{ d_G(u):u\in A\}$,
and define $\delta_B$ as well as $\Delta_B$ analogously.
By the result (\ref{eax1}) of Axenovich et al.~\cite{axsesnwe},
and, since $\frac{n}{2}-1\geq \frac{n-2}{3}$,
we may assume that $\Delta_A,\Delta_B\geq 3$.
Since $G$ has at most $2n$ edges, 
this implies $\delta_A,\delta_B\leq 1$.

First, suppose that $\delta_A=0$.
Let $u$ be an isolated vertex from $A$.
Let $v$ be a vertex of degree $\delta_B$ from $B$.
Let $u'$ be a vertex from $A\setminus \{ u\}$
of largest possible degree such that $N_G(v)\subseteq \{ u'\}$.
Let $v'$ be a vertex on degree $\Delta_B$ from $B$.
Note that $m(G')\leq 2(n-2)$.
By induction, the graph $G'=G-\{ u,v,u',v'\}$
has a bihole $I'$ of order at least $\frac{n-2-2}{3}$.
Since adding $u$ and $v$ to $I'$ yields a bihole in $G$,
the desired statement follows.
Hence, by symmetry, we may assume that $\delta_A=\delta_B=1$.

Next, suppose that there are non-adjacent vertices 
$u$ from $A$ 
and
$v$ from $B$
that are both of degree $1$.
Let $v'$ be the neighbor of $u$,
and 
let $u'$ be the neighbor of $v$.
If $d_G(u')\geq 3$,
then, by induction,
the graph $G'=G-\{ u,v,u',v'\}$
has a bihole $I'$ of order at least $\frac{n-2-2}{3}$.
Adding $u$ and $v$ to $I'$ yields a bihole of $G$
of the desired order.
Hence, by symmetry, we may assume that 
$d_G(u'),d_G(v')\leq 2$.
Let $u''$ be a vertex of degree $\Delta_A$ from $A$,
and 
let $v''$ be a vertex of degree $\Delta_B$ from $B$.
Let $G''$ be the graph $G-\{ u,v,u',v',u'',v''\}$.
It is easy to see that $m(G'')\leq 2(n-3)$.
By induction,
the graph $G''$
has a bihole $I''$ of order at least $\frac{n-3-2}{3}$.
Adding $u$ and $v$ to $I''$ yields a bihole of $G$
of the desired order.
Hence, we may assume that $A$ and $B$ 
both contain unique vertices of degree $1$, 
say $u$ and $v$, respectively, and that $u$ and $v$ are adjacent.
Since $n\geq 3$, $m(G)\leq 2n$, and $\Delta_A\geq 3$,
there is a vertex $u'$ of degree $2$ in $A$.
Let $v'$ and $v''$ be the two neighbors of $u'$.
Let $u''$ be a vertex of degree $\Delta_A$ from $A$.
Let $G''$ be the graph $G-\{ u,v,u',v',u'',v''\}$.
It is easy to see that $m(G'')\leq 2(n-3)$.
By induction,
the graph $G''$
has a bihole $I''$ of order at least $\frac{n-3-2}{3}$.
Adding $v$ and $u'$ to $I''$ yields a bihole of $G$
of the desired order,
which completes the proof.
\end{proof}
\noindent Our next goal is the proof of Theorem \ref{lemma1},
which refines (\ref{eax1}), and allows to slightly improve the lower bound on $f(n,3)$.

\begin{lemma}\label{lemma2}
If $G$ is a connected bipartite graph with partite sets $A$ and $B$ such that 
$|A|<|B|$ and every vertex in $A$ has degree at most $2$, 
then $G$ is a tree, $|B|=|A|+1$, and every vertex in $A$ has degree exactly $2$.
Furthermore, for every $i$ in $[|A|]_0$,
there is an independent set $I$ in $G$ with 
$|I\cap A|=i$ and $|I\cap B|=|A|-i$.
\end{lemma}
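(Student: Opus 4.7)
The plan is to first extract the structural claims ($G$ is a tree, $|B|=|A|+1$, every $A$-vertex has degree exactly $2$) by a short edge-counting argument, and then prove the existence of the independent sets by induction on $|A|$ using a leaf-peeling step.

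For the structural part, I would combine two inequalities: connectivity of $G$ forces $m(G)\geq |A|+|B|-1$, while the degree condition gives $m(G)=\sum_{u\in A}d_G(u)\leq 2|A|$. Together with $|B|>|A|$, these collapse to equalities, yielding $|B|=|A|+1$, $m(G)=|V(G)|-1$ (so $G$ is a tree), and every vertex in $A$ of degree exactly $2$.

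For the independent-set claim, I would induct on $|A|$, with the trivial base $|A|=0$, $|B|=1$, where $I=\emptyset$ works for the only value $i=0$. For the inductive step, observe that since every vertex in $A$ has degree $2$, every leaf of the tree $G$ lies in $B$. Choose such a leaf $v\in B$, let $u\in A$ be its unique neighbor, and set $G'=G-\{u,v\}$. Removing $v$ turns $u$ into a leaf of $G-v$, so $G'$ is still a tree; its partite sets $A'=A\setminus\{u\}$ and $B'=B\setminus\{v\}$ satisfy $|A'|=|A|-1<|A|=|B'|$, and degrees in $A'$ are unchanged, hence still at most $2$. Thus $G'$ satisfies the hypothesis of the lemma with one fewer vertex in~$A$.

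To produce the required set $I$ for a given $i\in[|A|]_0$, I would split into two cases. If $i\leq |A|-1$, apply induction to obtain an independent set $I'$ in $G'$ with $|I'\cap A'|=i$ and $|I'\cap B'|=|A|-1-i$, and put $I=I'\cup\{v\}$; independence is preserved because the only neighbor $u$ of $v$ is absent from $G'$, and the cardinalities on the two sides are $i$ and $|A|-i$ as required. If $i=|A|$, simply take $I=A$, which is independent in any bipartite graph and has $|I\cap B|=0=|A|-i$. The only step that requires any care is checking that the reduced graph $G'$ still satisfies the hypothesis of the lemma — in particular that it remains connected after the deletion of $\{u,v\}$ — but this follows immediately from the leaf structure, so I do not expect a serious obstacle in this argument.
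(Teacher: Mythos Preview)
Your proposal is correct and follows essentially the same approach as the paper: the identical edge-counting argument for the structural claims, and the same leaf-peeling induction (remove a $B$-leaf together with its $A$-neighbour, then add the leaf back to the inductively obtained set; handle $i=|A|$ by taking $I=A$). The only cosmetic differences are that you start the induction at $|A|=0$ rather than $|A|=1$, swap the roles of the names $u$ and $v$, and spell out explicitly why $G'$ again satisfies the hypotheses of the lemma, which the paper leaves implicit.
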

\begin{proof}
Since every vertex in $A$ has degree at most $2$, 
the graph $G$ has at most $2|A|$ edges.
Since $G$ is connected, it has at least $|A|+|B|-1\geq 2|A|$ edges.
It follows that $G$ has exactly $2|A|$ edges, 
every vertex in $A$ has degree exactly $2$,
$|B|=|A|+1$,
and $G$ is a tree.

We prove the existence of the desired independent sets by induction on $|A|$.
For $|A|=1$, the statement is trivial.
Now, let $|A|\geq 2$.
Clearly, choosing $I$ as $A$ yields $|I\cap A|=|A|$ and $|I\cap B|=|A|-|A|=0$,
that is, the statement is trivial for $i=|A|$.
Now, let $i\in [|A|-1]_0$.
Let $u$ be a vertex of degree $1$, and let $v$ be its unique neighbor.
By induction applied to $G'=G-\{ u,v\}$,
the graph $G'$ has an independent set $I'$ with 
$|I'\cap A|=i$ and $|I'\cap B|=(|A|-1)-i$, and adding $u$ to $I'$ 
yields the desired independent set.
\end{proof}

\begin{proof}[Proof of Theorem \ref{lemma1}]
By induction on $n_0$, we show that every bipartite graph $G$
with partite sets $A$ and $B$ such that 
\begin{itemize}
\item $|A|=|B|=n_0+n_1+n_2$, and
\item $n_i=|\{ u\in A:d_G(u)=i\}|$ for every $i\in [2]_0$,
\end{itemize}
has a bihole of order at least 
$\frac{3}{4}n_0+\frac{1}{2}(n_1+n_2)-\frac{7}{4}$.
If $n_0\leq 3$, then $G$ has a bihole of order at least 
\begin{eqnarray*}
\tilde{\alpha}(0^{n_0},1^{n_1},2^{n_2})\geq 
\tilde{\alpha}(0^{0},1^{0},2^{n_0+n_1+n_2})
\stackrel{(\ref{eax1})}{\geq}
\frac{n_0+n_1+n_2}{2}-1
\geq \frac{3}{4}n_0+\frac{1}{2}(n_1+n_2)-\frac{7}{4}.
\end{eqnarray*}
Now, let $n_0\geq 4$.
Let $u_1,\ldots, u_4$ be four isolated vertices from $A$.
Let $G_1,\ldots,G_r$ be the components of $G$
with $|V(G_i)\cap A|<|V(G_i)\cap B|$.
Since $|A|=|B|$ and $n_0\geq 4$, 
there is at least one such component, that is, we have $r\geq 1$.
By Lemma \ref{lemma2},
each $G_i$ is a tree with $|V(G_i)\cap B|=|V(G_i)\cap A|+1$,
which implies $r\geq n_0\geq 4$.
Let 
$A_i=V(G_i)\cap A$,
$B_i=V(G_i)\cap B$,
and
$a_i=|A_i|$,
for $i$ in $[4]$.
Clearly, we may assume that $a_1\leq a_2\leq a_3\leq a_4$.
If $B$ contains an isolated vertex $v$, then, applying induction to $G'=G-\{ u_1,v\}$,
we obtain that $G'$ has a bihole of order at least 
$\frac{3}{4}(n_0-1)+\frac{1}{2}(n_1+n_2)-\frac{7}{4}$,
and adding $u_1$ and $v$ yields a bihole of more than the desired order.
Hence, we may assume that no vertex in $B$ is isolated, 
in particular, we have $a_1\geq 1$.

First, we assume that $a_1$ and $a_2$ have different parities modulo $2$.
By Lemma \ref{lemma2}, there is an independent set $I_2$ of $G_2$ with
$$|I_2\cap A|=\frac{a_2+a_1-1}{2}\,\,\,\,\,\,\,\,\mbox{ and }\,\,\,\,\,\,\,\,
|I_2\cap B|=\frac{a_2-a_1+1}{2}.$$
By induction, the graph $G'=G-\big(\{ u_1,u_2\}\cup V(G_1)\cup V(G_2)\big)$
has a bihole $I'$ of order at least 
$\frac{3}{4}(n_0-2)+\frac{1}{2}(n_1+n_2-a_1-a_2)-\frac{7}{4}.$
Now, the set $\big(\{ u_1,u_2\}\cup B_1\cup I_2\big)\cup I'$ 
is a bihole in $G$ of order at least
$$\frac{1}{2}\big(2+(a_1+1)+a_2\big)+
\left(\frac{3}{4}(n_0-2)+\frac{1}{2}(n_1+n_2-a_1-a_2)-\frac{7}{4}\right)
=\frac{3}{4}n_0+\frac{1}{2}(n_1+n_2)-\frac{7}{4}.$$
Hence, we may assume that 
$a_1$ and $a_2$ have the same parity modulo $2$, and, by symmetry, 
that also $a_3$ and $a_4$ have the same parity modulo $2$.
Note that $\frac{a_4+a_3-2}{2}\in [|A|]_0$.
By Lemma \ref{lemma2}, there is an independent set $I_2$ of $G_2$ with
$$|I_2\cap A|=\frac{a_2+a_1}{2}\,\,\,\,\,\,\,\,\mbox{ and }\,\,\,\,\,\,\,\,
|I_2\cap B|=\frac{a_2-a_1}{2},$$
as well as an independent set $I_4$ of $G_4$ with
$$|I_4\cap A|=\frac{a_4+a_3-2}{2}\,\,\,\,\,\,\,\,\mbox{ and }\,\,\,\,\,\,\,\,
|I_4\cap B|=\frac{a_4-a_3+2}{2}.$$
By induction, the graph $G''=G-\big(\{ u_1,u_2,u_3,u_4\}\cup V(G_1)\cup V(G_2)\cup V(G_3)\cup V(G_4)\big)$
has a bihole $I''$ of order at least 
$\frac{3}{4}(n_0-4)+\frac{1}{2}(n_1+n_2-a_1-a_2-a_3-a_4)-\frac{7}{4}.$
Now, the set $\big(\{ u_1,u_2,u_3,u_4\}\cup B_1\cup I_2\cup B_3\cup I_4\big)\cup I''$ 
is a bihole in $G$ of order at least
\begin{eqnarray*}
&& \frac{1}{2}\big(4+(a_1+1)+a_2+(a_3+1)+a_4\big)+
\left(\frac{3}{4}(n_0-4)+\frac{1}{2}(n_1+n_2-a_1-a_2-a_3-a_4)-\frac{7}{4}\right)\\
&=&\frac{3}{4}n_0+\frac{1}{2}(n_1+n_2)-\frac{7}{4},
\end{eqnarray*}
which completes the proof.
\end{proof}
\noindent The following example shows that 
the coefficient $\frac{3}{4}$ for $n_0$ 
in Theorem \ref{lemma1} is best possible:
For an even integer $i$, 
let the bipartite graph $G$
have partite sets $A$ and $B$ 
and exactly $2i$ components 
such that 
there are $i$ isolated vertices that all belong to $A$, 
and $i$ paths $P_1,\ldots,P_i$, each of order $4i+1$, 
whose endpoints all belong to $B$. 
Note that $|A|=|B|=i+2i^2$, $n_0=i$, and $n_2=2i^2$.
Let $I$ be a largest bihole in $G$. 
From every path $P_i$,  
at most $2i+1$ vertices can belong to $I$, 
and, if $2i+1$ vertices belong to $I$, 
then $V(P_i)\cap I\subseteq B$.
If in more than $\frac{i}{2}$ of the paths $P_i$,
at least $2i+1$ vertices belong to $I$, then
$$|I\cap A|\leq \left( \frac{i}{2}-1\right) 2i+i=i^2-i
<i^2+\frac{5}{2}i+1=\left(\frac{i}{2}+1\right)(2i+1)
\leq|I\cap B|,$$
which is a contradiction. 
Hence, in at most $\frac{i}{2}$ of the paths $P_i$,
at least $2i+1$ vertices belong to $I$, which implies
$$|I|\leq\frac{1}{2}\left(i+(2i+1)\frac{i}{2}+2i\frac{i}{2}\right)
=i^2+\frac{3}{4}i=\frac{3}{4} n_0+\frac{1}{2}n_2.$$
It seems a challenging problem to determine the value $\tilde{\alpha}(0^{n_0},1^{n_1},2^{n_2})$
exactly for all choices of $n_0$, $n_1$, and $n_2$.
In fact, depending on the relative values of the $n_i$,
they should contribute to this value with different coefficients.
If, for instance, $n_2=0$, then, by Lemma \ref{lemma3},
the coefficient of $n_0$ is $1$ rather than 
$\frac{3}{4}$ as in Theorem \ref{lemma1}.

For the next proof, 
we need the following {\it Simple Concentration Bound} \cite{more}:
\begin{quote}
{\it Let $X$ be a random variable determined 
by $n$ independent trials $T_1,\ldots,T_n$
such that changing the outcome of any one trial 
can affect $X$ by at most $c$, then 
\begin{eqnarray}\label{econc}
\mathbb{P}\Big[|X-\mathbb{E}[X]|>t\Big]\leq 2e^{-\frac{t^2}{2c^2n}}
\,\,\,\,\,\,\,\mbox{ for every $t>0$.}
\end{eqnarray}}
\end{quote}

\begin{proof}[Proof of Theorem \ref{theorem1}]
Let $G$ be a bipartite graph with partite sets $A$ and $B$ such that 
$|A|=|B|=n$ and every vertex in $A$ has degree at most $3$.
We need to show that $G$ has a bihole of order at least $\big(0.34917-o(n)\big)n$.
Therefore, let $\epsilon$ be such that $0<\epsilon<\frac{1}{2\ln(8)}<0.25$.
Let $B_{\rm large}$ be the set of vertices in $B$ 
of degree more than $\epsilon^{3/2}\sqrt{n}$,
and let $B_{\rm small}=B\setminus B_{\rm large}$.
Since $G$ has at most $3n$ edges, 
we have $|B_{\rm large}|\leq \frac{3\sqrt{n}}{\epsilon^{3/2}}$.
Let $G^{(1)}$ arise from $G$ by removing $B_{\rm large}$
as well as any set of $|B_{\rm large}|$ vertices from $A$.
Let 
$$n_i^{(1)}=\left|\left\{ u\in V(G^{(1)})\cap A:d_{G^{(1)}}(u)=i\right\}\right|$$ 
for $i\in [3]_0$, and let $n^{(1)}=\left|V(G^{(1)})\cap A\right|$, that is,
$$n^{(1)}=n_0^{(1)}+n_1^{(1)}+n_2^{(1)}+n_3^{(1)}
=n-|B_{\rm large}|\geq \left(1-\frac{3}{\epsilon^{3/2}\sqrt{n}}\right)n.$$
Let $p$ be the real solution of the equation $p=(1-p)^3$,
that is, $p\approx 0.31767$.
Let $B^{(1)}$ be a random subset of $B_{\rm small}$
that arises by adding each of the $n^{(1)}$ vertices in $B_{\rm small}$ to the set $B^{(1)}$
independently at random with probability $p$.
Let $G^{(2)}$ arise from $G^{(1)}$ by removing $B^{(1)}$, 
let $b^{(1)}=|B^{(1)}|$, and 
let 
$$n_i^{(2)}=\left|\left\{ u\in V(G^{(2)})\cap A:d_{G^{(2)}}(u)=i\right\}\right|$$ 
for $i\in [3]_0$.

For the random variables $b^{(1)}$, $n_0^{(2)}$, and $n_3^{(2)}$, we obtain
\begin{eqnarray*}
\mathbb{E}\left[b^{(1)}\right] & = & pn^{(1)},\\
\mathbb{E}\left[n_0^{(2)}\right] & = & n^{(1)}_0+pn_1^{(1)}+p^2n_2^{(1)}+p^3n_3^{(1)}\geq p^3n^{(1)},\\
\mathbb{E}\left[n_3^{(2)}\right] & = & (1-p)^3n^{(1)}_3\leq (1-p)^3n^{(1)}.
\end{eqnarray*}
Applying (\ref{econc}) with $c=\epsilon^{3/2}\sqrt{n}$ in each case,
and using $\epsilon<\frac{1}{2\ln(8)}$, we obtain
\begin{eqnarray*}
\mathbb{P}\Big[\left|b^{(1)}-\mathbb{E}\left[b^{(1)}\right]\right|>\epsilon n^{(1)}\Big] & \leq &
2e^{-\frac{\left(\epsilon n^{(1)}\right)^2}{2\epsilon^3n n^{(1)}}}
\leq 2e^{-\frac{\left(1-\frac{3}{\epsilon^{3/2}\sqrt{n}}\right)}{2\epsilon}}<\frac{1}{3},\\
\mathbb{P}\Big[\left|n_0^{(2)}-\mathbb{E}\left[n_0^{(2)}\right]\right|>\epsilon n^{(1)}\Big] & < & \frac{1}{3},\mbox{ and}\\
\mathbb{P}\Big[\left|n_3^{(2)}-\mathbb{E}\left[n_3^{(2)}\right]\right|>\epsilon n^{(1)}\Big] & < & \frac{1}{3},
\end{eqnarray*}
for $n$ sufficiently large.

For $n$ sufficiently large, 
the union bound implies the existence of a choice of $B^{(1)}$ such that 
\begin{eqnarray*}
b^{(1)} & \leq & (p+\epsilon)n^{(1)},\\
n_0^{(2)}& \geq & (p^3-\epsilon)n^{(1)},\mbox{ and}\\
n_3^{(2)}& \leq & \big((1-p)^3+\epsilon\big)n^{(1)}=(p+\epsilon)n^{(1)}.
\end{eqnarray*}
Let $G^{(3)}$ arise from $G^{(2)}$ by removing 
\begin{itemize}
\item a set 
containing $\max\left\{ b^{(1)},n_3^{(2)}\right\}$ vertices from $V(G^{(2)})\cap A$ 
including all vertices from $V(G^{(2)})\cap A$ that are of degree $3$ in $G^{(2)}$ and as few isolated vertices of $G^{(2)}$ as possible, and
\item a set containing $\max\left\{ b^{(1)},n_3^{(2)}\right\}$ vertices from $V(G^{(2)})\cap B$.
\end{itemize}
By construction all vertices in $V(G^{(3)})\cap A$ have degree at most $2$ in $G^{(3)}$.
Since 
$$(p^3-\epsilon)n^{(1)}+\max\left\{ b^{(1)},n_3^{(2)}\right\}\leq 
(p^3-\epsilon)n^{(1)}+(p+\epsilon)n^{(1)}
\leq 0.34974n^{(1)}\leq n^{(1)},$$
the number $n^{(3)}_0$ of vertices in $V(G^{(2)})\cap A$ 
that are isolated in $G^{(3)}$
is at least $(p^3-\epsilon)n^{(1)}$.
By Theorem \ref{lemma1}, the graph $G^{(3)}$, 
and, hence, also $G$, contains a bihole of order at least
\begin{eqnarray*}
&& \frac{3}{4}n_0^{(3)}+\frac{1}{2}\Big(n^{(1)}-n_0^{(3)}-\max\left\{ b^{(1)},n_3^{(2)}\right\}\Big)-C\\
& \geq & \frac{3}{4}(p^3-\epsilon)n^{(1)}+\frac{1}{2}\Big(n^{(1)}-(p^3-\epsilon)n^{(1)}-(p+\epsilon)n^{(1)}\Big)-C\\
& \geq & \left(\frac{3}{4}(p^3-\epsilon)+\frac{1}{2}\Big(1-p^3-p\Big)\right)\left(1-\frac{3}{\epsilon^{3/2}\sqrt{n}}\right)n-C\\
& \geq & \left(0.34917-\frac{3}{4}\epsilon\right)\left(1-\frac{3}{\epsilon^{3/2}\sqrt{n}}\right)n-C,
\end{eqnarray*}
which completes the proof. 
\end{proof}


\begin{thebibliography}{}
\bibitem{axtowe} M. Axenovich, C. Tompkins, and L. Weber,
Large homogeneous subgraphs in bipartite graphs with forbidden induced subgraphs, arXiv:1903.09725v1.
\bibitem{axsesnwe} M. Axenovich, J.-S. Sereni, R. Snyder, and L. Weber, Bipartite independence number in graphs with bounded maximum degree, arXiv:2002.10930.
\bibitem{ca} Y. Caro, New Results on the Independence Number, Technical Report, Tel-Aviv University, 1979.
\bibitem{more} M. Molloy and B. Reed, Graph colouring and the probabilistic method, Springer-Verlag, Berlin, 2002.
\bibitem{tu} P. Tur\'{a}n, An extremal problem in graph theory (Hungarian), 
Matematikai \'{e}s Fizikai Lapok 48 (1941) 436-452.
\bibitem{we} V.K. Wei, A Lower Bound on the Stability Number of a Simple Graph, Technical memorandum, TM 81 - 11217 - 9, Bell laboratories, 1981.
\end{thebibliography}
\end{document}